\documentclass[12pt, reqno]{amsart}
\usepackage{amssymb,amsthm,amsmath}
\usepackage[numbers,sort&compress]{natbib}
\usepackage{amssymb,amsmath}
\usepackage{amsfonts,dsfont}
\usepackage{mathrsfs}
\usepackage{latexsym}
\usepackage{amssymb}
\usepackage{amsthm}
\usepackage{indentfirst}
\usepackage{hyperref}
\usepackage{color}
\usepackage{extpfeil}
\date{\today}

\let\oldsection\section
\renewcommand\section{\setcounter{equation}{0}\oldsection}

\newtheorem{corollary}{Corollary}[section]
\newtheorem{theorem}{Theorem}[section]
\newtheorem{lemma}{Lemma}[section]
\newtheorem{proposition}{Proposition}[section]
\newtheorem{definition}{Definition}[section]

\allowdisplaybreaks
\begin{document}

\title[Uniqueness of Solutions to Viscous Primitive Equations]{Uniqueness of Leray-Hopf weak solutions to the two-dimensional viscous Primitive Equations}
\author{Jinkai~Li}
\address[Jinkai~Li]{South China Research Center for Applied Mathematics and Interdisciplinary Studies, School of Mathematical Sciences,
South China Normal University, Guangzhou 510631, China}
\email{jklimath@gmail.com; jklimath@m.scnu.edu.cn}
\author{Lintao~Liu}
\address[Lintao~Liu]{The Institute of Mathematical Sciences, The Chinese University of Hong Kong, Hong Kong}
\email{ltliu@math.cuhk.edu.hk}

\author{Edriss S.~Titi}
\address[Edriss S.~Titi]{Department of Mathematics, Texas A$\&$M University, College Station, TX 77843, USA.
Department of Applied Mathematics and Theoretical Physics, University of Cambridge,
Cambridge CB3 0WA, UK. Department of Computer Science and Applied Mathematics,
Weizmann Institute of Science, Rehovot 76100, Israel.}
\email{titi@math.tamu.edu; Edriss.Titi@maths.cam.ac.uk}

\author{Dong~Wang}
\address[Dong~Wang]{South China Research Center for Applied Mathematics and Interdisciplinary Studies, School of Mathematical Sciences,
South China Normal University, Guangzhou 510631, China}
\email{WDongter@m.scnu.edu.cn}
\keywords{Uniqueness; weak solutions; primitive equations.}
\subjclass[2010]{76D03, 76D05.}
%
%
\begin{abstract}
Global existence of Leray-Hopf (type) weak solutions to the viscous primitive equations in two or three dimensions was established by Lions--Temam--Wang \cite{Lions1,Lions2,Lions3,Lions4} in the 1990s;
however, the corresponding uniqueness, no matter in two or three dimensions, has been a long standing open problem since then. In this paper,
we solve this open problem in the two-dimensional case. Precisely, we prove the uniqueness of Leray-Hopf weak solutions, with arbitrary $L^2$ initial data, to the viscous primitive equations in a periodic channel, subject to impermeability and stress-free boundary conditions on the solid boundaries.
\end{abstract}
%
\maketitle

\allowdisplaybreaks

\section{Introduction}
\label{sec1}
The primitive equations form a fundamental block in models of planetary oceanic and atmospheric dynamics;
see, e.g., Lewandowski \cite{Lewan}, Majda \cite{Majda}, Pedlosky \cite{Pedlosky}, Vallis \cite{Vallis}, Washington--Parkinson \cite{Washing}, and Zeng \cite{Zeng}.
In the context of the large-scale oceanic and atmospheric dynamics, an important feature is that the horizontal scale of the ocean
and atmosphere is much larger than the vertical ones. Due to this fact, the primitive equations are derived by taking the small aspect
ratio limit to the Navier-Stokes equations, which has been rigorous justified by Az\'{e}rad--Guill\'{e}n \cite{PA.FG} in the framework of weak solutions and by Li--Titi \cite{J.Li2,J.Li3} in the framework of strong solutions, see also \cite{FugigaHHK, FuGiHiHuKaW1,FuruGigaKas,GaoNeTang,LiTitiYuan,LiuTiti,Puzhou} for further developments.

Mathematical analysis on the primitive equations has been attracting mathematicians' attentions and many significant results have been established. Global existence of weak solutions was established by Lions--Temam--Wang \cite{Lions1,Lions2,Lions3,Lions4} in the 1990s. However, the uniqueness of weak solutions, in both two and three dimensions, has been a long standing open problem since then.
Note that this is different from the Navier-Stokes equations, as the uniqueness of weak solutions to the two dimensional incompressible Navier-Stokes equations has been known for long times, see, e.g., \cite{PccF,lady,RoTemma}.
Break through concerning the strong solutions was made by Cao-Titi \cite{Cao1}, where they established the global well-posedness of strong solutions to the three dimensional primitive equations, see also Kobelkov \cite{GMK} and Kukavica--Ziane \cite{IKMZ}. For further development toward this direction, one may refer to \cite{Boling,MHTKa,MHATK,Cao2,J.Li5,J.Li6,J.Li1,J.Li7, J.Li8,CaoLiTitiWang,Husalmart,LiYuan,fanghan}; in particular, global well-posedness of strong solutions to the primitive equations with only horizontal viscosity was established in \cite{J.Li1,J.Li7, J.Li8,CaoLiTitiWang,Husalmart,fanghan}.

As mentioned in the previous paragraph, uniqueness of weak solutions to the primitive equations has been an open problem for long time. Here, the weak solutions that we
mention are those that with initial data only in $L^2$, and thus not any further regularities beyond $L^2$ are assumed. In the case that some more regularities are assumed on
the initial data, the weak solutions will enjoy more regularities and, as a result, are unique. This was first observed by Bresch et al \cite{DFA}, where they proved the
uniqueness in two dimensional of the so called z-weak solutions, i.e. those solutions that with initial data $v_0, \partial_z v_0\in L^2$. Such uniqueness of z-weak solutions
was later extended in the three dimensional case, see Tachim-Medjo \cite{TTM}. Another kind of weak solutions that has been known to be unique is those that with initial data
in the space  of continuous functions, see Kukavica--Pei--Ziane \cite{IYWM}. One can even show that the unique still hold if making small $L^\infty$ perturbation to the z-weak solutions, see Li--Titi \cite{J.Li4}.

Note that in all the works \cite{DFA,TTM,IYWM,J.Li4} mentioned in the previous paragraph, additional regularities beyond $L^2$ integrability on the initial data are required to guarantee the uniqueness of weak solutions, and thus the unique of general weak solutions is still unsolved. In this work, we will address this problem in the two dimensional case.

Let $h$ be a given positive number and $\Omega=\mathbb T\times (-h,0)$ a periodic channel. Consider the primitive equations in $\Omega$
\begin{align*}
\begin{cases}
&\partial_t v+v\partial_x v+w\partial_z v-\Delta v+\partial_xp=0,\\
&\partial_zp=0,  \quad \partial_x v+\partial_z w=0,
\end{cases}
\end{align*}
where $p(x,t),v(x,z,t), w(x,z,t)$ are the pressure, horizontal velocity and vertical velocity, respectively.
Complement the above system with the following boundary and initial conditions
\begin{eqnarray}
  &v, w \,\text{are periodic in}\, x,\nonumber\\
  &\partial_z v|_{z=-h,0}=w|_{z=-h,0}=0,  \label{bcv}\\
  &v|_{t=0}=v_0,\nonumber
\end{eqnarray}
with a given initial datum $v_0\in L^2(\Omega)$. Note that \eqref{bcv} is a natural impermeability and stress-free condition on the solid boundaries $z=-h, 0$.

Thanks to \eqref{bcv}, one can rewrite the above system equivalently as
\begin{eqnarray}
&\partial_tv+v\partial_xv+w\partial_zv-\Delta v
+\partial_xp=0, \label{eq8}\\
&\partial_z p=\int_{-h}^0\partial_x vdz=0, \label{eq9}\\
&w(x,z,t)=-\int_{-h}^z \partial_x v(x,\xi,t)d\xi,\label{eq11}
\end{eqnarray}
subject to the following boundary and initial conditions
\begin{eqnarray}
  &v \text{ is periodic in}\, x\label{eq12}\\
  &\partial_z v|_{z=-h,0}=0,  \label{eq13}\\
  &v|_{t=0}=v_0. \label{eq14}
\end{eqnarray}
Note that the impermeability condition $w|_{z=-h,0}=0$ in \eqref{bcv} is automatically guaranteed by \eqref{eq9}--\eqref{eq11}.

Throughout this paper, we use $L^q(\Omega)$ and $W^{m,q}(\Omega)$ to denote the standard Lebesgue and Sobolev spaces, repectively. For $q=2$, we use $H^m$ to replace $W^{m,2}$. We always use $\|f\|_p$ to denote the $L^p(\Omega)$ norm of $f$.

The following space $\mathcal{H}$ will be used:
$$\mathcal{H}=\left\{v\in H^1(\Omega)\,\Bigg|\, v \text{ is periodic in } x, \text{ and }\partial_x\int_{-h}^0 vdz=0\right\}.$$
\begin{definition}\label{def1}
Given a function $v_0 \in L^2(\Omega)$. A function $v$ is called a global Leray-Hopf weak solution to system
(\ref{eq8})--(\ref{eq9}), subject to (\ref{eq12})--(\ref{eq14}), if the following hold:

(i) $v\in C([0,\infty);L_w^2(\Omega))\cap L_{loc}^2([0,\infty);\mathcal{H})$, where $L^2_w$ stands for the $L^2$ space equipped with the weak topology.

(ii) For any compactly supported in time function $\varphi \in C([0,\infty);C^1(\Omega)\cap \mathcal{H})\cap C^1([0,\infty);C(\Omega))$, the following equality holds:
\begin{align*}
\int_0^\infty \int_\Omega[-v\partial_t\varphi+ v\partial_xv \varphi+w\partial_zv \varphi+\nabla v:\nabla \varphi]\,dxdzdt\\
=\int_\Omega v_0(x,z)\varphi(x,z,0)dxdz,
\end{align*}
where the vertical velocity $w$ is given by \eqref{eq11}.

(iii) The following energy inequality holds:
\begin{align*}
\frac{1}{2}\|v\|_2^2(t)+\int_0^t\|\nabla v\|_2^2(s)ds\leq \frac{1}{2}\|v_0\|_2^2,
\end{align*}
for a.e. $t\in (0,\infty)$.
\end{definition}


Our main result is the following theorem.

\begin{theorem}
  \label{the1}

For any $v_0  \in L^2(\Omega)$, the weak solution to system (\ref{eq8})--(\ref{eq9}), subject to (\ref{eq11})--(\ref{eq14}), is unique.
\end{theorem}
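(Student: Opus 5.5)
The plan is to compare two Leray--Hopf weak solutions $v_1,v_2$ with the same datum $v_0$ and estimate the difference $\delta v=v_1-v_2$, with $\delta w=w_1-w_2$. The obstruction to the Navier--Stokes argument is the term $\delta w\,\partial_z v_2$: since $w$ carries a full horizontal derivative of $v$, the nonlinearity is critical even in two dimensions once only $L^2$ regularity is available. So I will not estimate $\delta v$ in $L^2$ directly. Instead I will first prove that every weak solution becomes a strong solution instantly, and that its $H^1$ norm is only mildly singular at $t=0$. Then I will close a weighted or weaker-norm estimate for the difference in which that singularity is integrable.

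\emph{Step 1 (regularity of an arbitrary weak solution).} I will show that any Leray--Hopf weak solution satisfies $\|\nabla v(t)\|_2\le C(t)$ on $(0,T]$, with an explicit rate such as $t\|\nabla v\|_2^2\to0$, and that $\sqrt t\,\nabla v\in L^\infty(0,T;L^2)$, $\sqrt t\,\Delta v\in L^2(0,T;L^2)$. Since $v\in L^2_t H^1$, for a.e.\ small $t_0$ we have $v(t_0)\in H^1$. From there the strong solution of Cao--Titi type exists globally, and weak--strong uniqueness, which is known in this class, identifies it with $v$ on $[t_0,\infty)$. This gives smoothness for $t>0$. The quantitative decay near $0$ requires uniform estimates. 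I will run the $H^1$ energy estimate with time weight $t$, using the 2D anisotropic (Ladyzhenskaya-type) inequalities $\|f\|_4^2\lesssim\|f\|_2\|f\|_{H^1}$ together with the barotropic/baroclinic splitting $v=\bar v+\tilde v$. Here $\bar v$ is constant in $x$ by the constraint in $\mathcal H$, so $\partial_x\bar v=0$, which simplifies the estimate.

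\emph{Step 2 (difference estimate).} Next I will write the equation for $\delta v$ and test it against a smoothed quantity. I will first try testing against $\delta v$ itself, treating the bad term as
\begin{equation*}
\left|\int \delta w\,\partial_z v_2\,\delta v\right|\le \int_{\mathbb T}\Big(\int_{-h}^0|\partial_x\delta v|dz\Big)\Big(\int_{-h}^0|\partial_zv_2||\delta v|dz\Big)dx .
\end{equation*}
This is bounded by $\|\partial_x\delta v\|_2\,\|\partial_z v_2\|_{L^\infty_xL^2_z}\|\delta v\|_{L^2_xL^\infty_z}$, and then by $C\|\nabla\delta v\|_2^{3/2}\|\delta v\|_2^{1/2}\|\partial_zv_2\|_2^{1/2}\|\nabla\partial_zv_2\|_2^{1/2}$. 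Young's inequality then gives the factor $\|\partial_zv_2\|_2^2\|\nabla\partial_zv_2\|_2^2$ in Gronwall, which by Step 1 behaves like $t^{-1}\cdot(\text{something integrable})$ and is not integrable at $0$.

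To fix this I will combine two ingredients. The first is the decay $t\|\nabla v_2\|_2^2\to0$ from Step 1. The second is a smallness of $\|\delta v(t)\|_2$ near $0$: both solutions are weakly continuous into $L^2$, satisfy the energy inequality, and hence are strongly continuous at $t=0$, so $\|\delta v(t)\|_2\to0$. With these I will run a Gronwall argument for the weighted quantity $\|\delta v\|_2^2/t^{\epsilon}$, or compare $\|\delta v\|_2^2$ against $\exp(\int\cdots)$ using the fact that the borderline $\int t^{-1}$ can be absorbed by an $o(1)$ coefficient. If this fails, the fallback is to estimate $\delta v$ in a negative-order norm in $z$, for instance testing with $\int_{-h}^z\delta v$-type antiderivatives, which moves one derivative off $\delta w\,\partial_z v_2$.

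\emph{Main obstacle.} The hard part is Step 2 near $t=0$, namely showing that the critical coefficient produced by $\delta w\,\partial_z v_2$ is controlled well enough, via the $o(t^{-1})$ decay from Step 1 and $\|\delta v(t)\|_2\to0$, to force $\delta v\equiv0$ on a short interval. Away from $0$, uniqueness follows from the strong theory.
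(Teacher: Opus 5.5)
Your main route in Step 2 cannot close. It rests on an inequality $X'\le g(t)X$ for $X=\|\delta v\|_2^2$ with $g\notin L^1(0,T)$, which you hope to rescue with the decay $t\|\nabla v_2\|_2^2\to0$ and the qualitative limit $X(0^+)=0$. That is not enough. The function $X(t)=1/\log(1/t)$ satisfies $X'=\frac{\epsilon(t)}{t}X$ with $\epsilon(t)=1/\log(1/t)\to0$ and $X(0^+)=0$, so even an $o(1)/t$ coefficient does not force $X\equiv0$. You would need $g\in L^1(0,T)$, or a vanishing rate for $\|\delta v(t)\|_2$ matched to $g$; your weight $t^{-\epsilon}$, for instance, requires $\|\delta v(t)\|_2^2=O(t^{\epsilon})$, and $L^2$ data give no rate. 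Your coefficient is also not borderline. With your own Step 1 bounds, $\|\partial_zv_2\|_2^2\|\nabla\partial_zv_2\|_2^2$ is $o(t^{-2})$ times an integrable function, not $t^{-1}$ times one. The time-weighted bounds of Step 1 are themselves not justified by your sketch: the $\partial_xv$-estimate meets a coefficient like $\|\partial_zv\|_2\|\nabla\partial_zv\|_2$, whose integrability at $t=0$ is exactly what $L^2$ data do not give. The paper does not need them; it uses only smoothness for $t>0$, the energy equality and $v\in C([0,\infty);L^2)$ from Proposition~\ref{propreg}. The $L^2$ difference estimate has to be replaced, not patched.

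Your fallback, a negative-order norm in $z$, is the paper's direction, but the mechanism is missing, and the literal version fails. For $\Phi=\int_{-h}^z\delta v\,dz'$ one has $\int_\Omega\delta v\,\Phi=\frac12\int_{\mathbb T}\Phi(x,0)^2dx$, which vanishes once $\int_{-h}^0\delta v\,dz=0$. So testing with $\Phi$ produces no coercive quantity, and since $\Phi$ has nonzero vertical mean the pressure does not drop out. The paper tests with $\Lambda_z^{-1}a$, where $a=v_1-v_2$ and $b=\frac{v_1+v_2}{2}$, and this requires four ingredients:
\begin{itemize}
\item[(i)] $\int_{-h}^0a\,dz=0$. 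This follows from $\frac{d}{dt}\int_\Omega a=0$, $L^2$-continuity at $t=0$ and $\partial_x\int_{-h}^0a\,dz=0$. It makes $\Lambda_z^{-1}a$ meaningful and kills $\int_\Omega\partial_xp\,\Lambda_z^{-1}a$.
\item[(ii)] An exact cosine-series evaluation of all four trilinear terms (Proposition~\ref{prouni1}). Against $\Lambda_z^{-1}a$ the transport cancellation $\int(ba_x+w_b\partial_za)a=0$ is lost. The terms $w_a\partial_zb$ and $w_b\partial_za$ separately carry weights $\frac{n}{\ell m},\frac{m}{n\ell},\frac{\ell}{nm}$, which are too large. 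Combined with $ab_x+ba_x$, they reduce to $-\frac{3h}{4m}$ on $\{\ell=m+n\}\cup\{m=\ell+n\}$ and $\frac{(m-\ell)h}{4mn}$ on $\{n=\ell+m\}$, all $O(1/m)$.
\item[(iii)] Lemma~\ref{lmsum1}, which bounds the resulting sum with a loss of only $\sqrt{\log(e+M)}$ plus $M^{-1/2}$ remainder terms.
\item[(iv)] The choice $M\sim R/E_\varepsilon$. This yields the Osgood inequality $E_\varepsilon'\le Cf(t)E_\varepsilon\log(2eR/E_\varepsilon)$ with $f=\|b\|_{H^1}^2+\|b\|_{H^1}\|a_x\|_2\in L^1(0,T)$, by the energy inequality alone.
\end{itemize}
The estimate is thus critical up to a logarithm, with an $L^1$ coefficient and no time weights. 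Without (i)--(iv) your proposal contains no proof of uniqueness near $t=0$, which is the whole content of the theorem.
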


This will be proven in the next section.

\section{the proof for Theorem \ref{the1}}

We start with the following lemma.

\begin{lemma}\label{lmsum1}
Let $\alpha=\{\alpha_k\}_{k\geq1}, \beta=\{\beta_k\}_{k\geq1}, \gamma=\{\gamma_k\}_{k\geq1}$. Then, for any integer $M\geq 1$, it holds that
\begin{align*}
&\left|\sum_{\ell,m\geq 1}\sqrt{\frac{\max\{\ell,m\}}{\min\{\ell,m\}}}\left( {\bf 1}_{\ell\neq m}\gamma_{|\ell-m|}+\gamma_{\ell+m}\right)\alpha_m \beta_\ell\right| \\
\leq& C\|\{\sqrt k\gamma_k\}_{k\geq1}\|_{\ell^2}\Bigg(\sqrt{\log(e+M)}\|\alpha\|_{\ell^2}\|\beta\|_{\ell^2}\\
&
+\frac{\|\{\sqrt{k}\alpha_k\}_{k\geq1}\|_{\ell^2}\|\beta\|_{\ell^2}+\|\alpha\|_{\ell^2}\|\{\sqrt{k}\beta_k\}_{k\geq1}\|_{\ell^2}}{\sqrt{M}}\Bigg)
\end{align*}
for an absolute positive constant $C$.
\end{lemma}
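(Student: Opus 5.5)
By the symmetry of the kernel in $(\ell,m)$, we split the double sum into the diagonal part $\ell=m$, the part $m<\ell$, and the part $\ell<m$. The last part is the second one with the roles of $\alpha$ and $\beta$ exchanged, and the right-hand side of the claimed inequality is symmetric in $\alpha,\beta$. So it suffices to bound the diagonal and the region $m<\ell$, where the weight is $\sqrt{\ell/m}$. Throughout, write $\Gamma=\|\{\sqrt k\gamma_k\}\|_{\ell^2}$ and $A_m=|\alpha_m|/\sqrt m$. Only absolute values are used.

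The diagonal part contains only the terms $\gamma_{2m}\alpha_m\beta_m$. Since $|\gamma_{2m}|\le\Gamma$, it is bounded by $\Gamma\|\alpha\|_{\ell^2}\|\beta\|_{\ell^2}$.

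\emph{The $\gamma_{\ell+m}$ terms with $m<\ell$.} Set $n=\ell+m$. Then $\sqrt{\ell/m}\le\sqrt{n/m}$, so these terms are bounded by
$$\sum_n\sqrt n|\gamma_n|\sum_{m<n}A_m|\beta_{n-m}|\le \Gamma\,\big\|A*|\beta|\big\|_{\ell^2}\le \Gamma\,\|\beta\|_{\ell^2}\sum_m A_m,$$
using Cauchy--Schwarz in $n$ and Young's inequality $\ell^1*\ell^2\subset\ell^2$. The sum $\sum_m A_m$ need not be finite, and this logarithmic divergence is the main obstacle. We handle it by cutting at $M$. For $m\le M$, Cauchy--Schwarz gives $\sum_{m\le M}A_m\le \|\alpha\|_{\ell^2}\big(\sum_{m\le M}1/m\big)^{1/2}\le C\sqrt{\log(e+M)}\,\|\alpha\|_{\ell^2}$. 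For $m>M$, write $A_m=\sqrt m|\alpha_m|/m$; then $\sum_{m>M}A_m\le\|\{\sqrt k\alpha_k\}\|_{\ell^2}\big(\sum_{m>M}m^{-2}\big)^{1/2}\le C\|\{\sqrt k\alpha_k\}\|_{\ell^2}/\sqrt M$. This yields exactly the two types of terms on the right-hand side of the lemma.

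\emph{The $\gamma_{\ell-m}$ terms with $m<\ell$.} Set $k=\ell-m\ge1$ and use $\sqrt{(m+k)/m}\le 1+\sqrt{k/m}$.

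For the $\sqrt{k/m}$ piece we again apply Cauchy--Schwarz in $k$:
$$\sum_k\sqrt k|\gamma_k|\sum_m A_m|\beta_{m+k}|\le\Gamma\Big(\sum_k\Big(\sum_m A_m|\beta_{m+k}|\Big)^2\Big)^{1/2}.$$
The correlation sequence $k\mapsto\sum_m A_m|\beta_{m+k}|$ is bounded in $\ell^2$ by $\|\beta\|_{\ell^2}\sum_mA_m$. To see this, apply Minkowski's inequality in $m$: each shifted sequence $k\mapsto|\beta_{m+k}|$ has $\ell^2$ norm at most $\|\beta\|_{\ell^2}$. Then we split $\sum_m A_m$ at $M$ exactly as above.

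For the piece with weight $1$ we split the sum in $k$ instead.
\begin{itemize}
\item For $k\le M$, we have $\sum_m|\alpha_m||\beta_{m+k}|\le\|\alpha\|_{\ell^2}\|\beta\|_{\ell^2}$, and $\sum_{k\le M}|\gamma_k|\le\Gamma\big(\sum_{k\le M}1/k\big)^{1/2}\le C\Gamma\sqrt{\log(e+M)}$.
\item For $k>M$, use $|\beta_{m+k}|\le\sqrt{m+k}\,|\beta_{m+k}|/\sqrt k$. This gives the bound $\|\alpha\|_{\ell^2}\|\{\sqrt k\beta_k\}\|_{\ell^2}\sum_{k>M}|\gamma_k|/\sqrt k$, and $\sum_{k>M}|\gamma_k|/\sqrt k\le\Gamma\big(\sum_{k>M}k^{-2}\big)^{1/2}\le C\Gamma/\sqrt M$.
\end{itemize}

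Collecting the diagonal bound, the two bounds for $m<\ell$, and their mirror images for $\ell<m$ gives the stated inequality with an absolute constant $C$.
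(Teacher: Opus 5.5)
Your proof is correct and rests on the same two truncations at $M$ as the paper's: the truncation of $\sum_k \alpha_k/\sqrt{k}$ (and $\sum_k\beta_k/\sqrt{k}$), and the truncation of $\sum_n |\gamma_n|$, each bounded by Cauchy--Schwarz against $\sum 1/n$ or $\sum 1/n^2$. The difference is only bookkeeping. The paper splits the index set into $\ell\ge 2m$, $m\ge 2\ell$ and $m/2<\ell<2m$, using $\ell\le 2(\ell-m)$ in the far regions and bounding the weight by $\sqrt2$ in the comparable one. You instead reduce to $m<\ell$ by symmetry and split the weight additively via $\sqrt{(m+k)/m}\le 1+\sqrt{k/m}$. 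Your $\sqrt{k/m}$ piece is then the $\gamma_{|\ell-m|}$ part of $S_1$, your weight-one piece is $I_2$, and your $\gamma_{\ell+m}$ bound merges the paper's $S_1$ and $I_3$ treatments, all extended to the whole region $m<\ell$. This is slightly more economical and gives the same bound.
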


\begin{proof}
Without loss of generality, we may assume that all components of $\alpha, \beta, \gamma$ are nonnegative, otherwise we may consider the inequality for $\tilde\alpha, \tilde\beta, \tilde\gamma$ whose components are given by the absolute values of those of $\alpha, \beta, \gamma$. For any $M\geq 1$, one deduces that
\begin{align}
\sum_{k\geq 1}\frac{\alpha_k}{\sqrt{k}}=&\sum_{1\leq k\leq M}\frac{\alpha_k}{\sqrt{k}}+\sum_{k\geq M+1}\frac{\sqrt{k}\alpha_k}{k}\nonumber\\
\leq& \left(\sum_{1\leq k\leq M}\alpha_k^2\right)^{1/2}\left(\sum_{1\leq k\leq M}\frac{1}{k}\right)^{1/2}+\left(\sum_{k\geq M+1}k\alpha_k^2\right)^{1/2}\left(\sum_{k\geq M+1}\frac{1}{k^2}\right)^{1/2}\nonumber\\
\leq& \sqrt{2 \log (e+M)}\|\alpha\|_{\ell^2}+\frac{\|\{\sqrt{k}\alpha_k\}_{k\geq1}\|_{\ell^2}}{\sqrt M},\label{eqak-1}
\end{align}
where in the last step the facts that $\sum_{1\leq k\leq M}\frac{1}{k}\leq1+\int_1^M\frac{dx}{x} \leq 2\log(e+M)$
and $\sum_{k\geq M+1}\frac{1}{k^2}\leq\int_M^\infty\frac{dx}{x^2}=\frac1M$ have been used.
Similarly,
\begin{align}
\sum_{k\geq 1}\frac{\beta_k}{\sqrt{k}}\leq \sqrt{2 \log (e+M)}\|\beta\|_{\ell^2}+\frac{\|\{\sqrt{k}\beta_k\}_{k\geq1}\|_{\ell^2}}{\sqrt M}.\label{eqbk-1}
\end{align}
Split the sum $\sum_{\ell,m\geq 1}$ into three cases: $\ell\geq 2m, \ell\leq \frac m2,$ and $\frac m2<\ell<2m$, that is
\begin{align*}
  S:=&\sum_{\ell,m\geq 1}\sqrt{\frac{\max\{\ell,m\}}{\min\{\ell,m\}}}\left( {\bf 1}_{\ell\neq m}\gamma_{|\ell-m|}+\gamma_{\ell+m}\right)\alpha_m \beta_\ell\\
  =&\left(\sum_{\substack{\ell\geq 2m\\ m\geq1}}+\sum_{\substack{m\geq 2\ell\\ \ell\geq 1}}+\sum_{\substack{ \frac{m}{2}<\ell<2m\\ \ell,m\geq1}}\right)\sqrt{\frac{\max\{\ell,m\}}{\min\{\ell,m\}}}\left( {\bf 1}_{\ell\neq m}\gamma_{|\ell-m|}+\gamma_{\ell+m}\right)\alpha_m \beta_\ell\\
  =:&S_1+S_2+S_3.
\end{align*}
In case $\ell\geq 2m$, it follows $\ell\leq 2(\ell-m)$, and then
\begin{align}
S_1
=&\sum_{m\geq 1}\sum_{\ell\geq 2m}\sqrt{\frac{\ell}{m}}(\gamma_{\ell-m}+\gamma_{\ell+m})\alpha_m\beta_\ell\\
\leq& \sum_{m\geq 1}\frac{\alpha_m}{\sqrt{m}}\sum_{\ell\geq 2m}\left(\sqrt{2(\ell-m)}\gamma_{\ell-m}+\sqrt{\ell+m}\gamma_{\ell+m}\right)\beta_\ell\nonumber\\
\leq& \sum_{m\geq 1}\frac{\alpha_m}{\sqrt{m}}\left[\left(\sum_{\ell\geq 2m}2(\ell-m)\gamma_{\ell-m}^2\right)^{1/2}+
\left(\sum_{\ell\geq 2m}(\ell+m)\gamma_{\ell+m}^2\right)^{1/2}\right]\left(\sum_{\ell\geq 2m}\beta_\ell^2\right)^{1/2}\nonumber\\
\leq& C\|\{\sqrt{k}\gamma_k\}_{k\geq1}\|_{\ell^2}\|\beta\|_{\ell^2}\sum_{k\geq 1}\frac{\alpha_k}{\sqrt{k}}.\label{eqlgm-1}
\end{align}
In case $\ell\leq \frac{m}{2}$ or equivalently $m\geq 2\ell$, it follows $m\leq 2(m-\ell)$, and then
\begin{align}
S_2
=&\sum_{\ell\geq 1}\sum_{m\geq 2\ell}\sqrt{\frac{m}{\ell}}\left(\gamma_{m-\ell}+\gamma_{\ell+m}\right)\alpha_m\beta_\ell
\nonumber\\
\leq& \sum_{\ell\geq 1}\frac{\beta_\ell}{\sqrt{\ell}}\sum_{m\geq 2\ell}\left(\sqrt{2(m-\ell)}\gamma_{m-\ell}+\sqrt{\ell+m}\gamma_{\ell+m}\right)\alpha_m
\nonumber\\
\leq&\sum_{\ell\geq 1}\frac{\beta_\ell}{\sqrt{\ell}}
\left[\left(\sum_{\ell\geq 2m}2(\ell-m)\gamma_{\ell-m}^2\right)^{1/2}+
\left(\sum_{\ell\geq 2m}(\ell+m)\gamma_{\ell+m}^2\right)^{1/2}\right]\left(\sum_{m\geq 2\ell}\alpha_m^2\right)^{1/2}\nonumber\\
\leq&C\|\{\sqrt{k}\gamma_k\}_{k\geq1}\|_{\ell^2}\|\alpha\|_{\ell^2}\sum_{k\geq 1}\frac{\beta_k}{\sqrt{k}}.\label{eqmgl-2}
\end{align}
It remains to deal with the case $\frac m2<\ell<2m$, that is the estimate for $S_3$.
In this case, it follows $\sqrt{\frac{\max\{\ell,m\}}{\min\{\ell,m\}}}\leq\sqrt{2}$, and then
\begin{align}
S_3=&\sum_{\substack{ \frac{m}{2}<\ell<2m\\ \ell,m\geq1}}\sqrt{\frac{\max\{\ell,m\}}{\min\{\ell,m\}}}\left({\bf 1}_{\ell\neq m}\gamma_{|\ell-m|}+\gamma_{\ell+m}\right)\alpha_m\beta_\ell\nonumber\\
\leq &\sqrt{2}\left(\sum_{\substack{\frac{m}{2}<\ell<m\\ \ell,m\geq1}}\gamma_{m-l}\alpha_m\beta_\ell
+\sum_{\substack{ m<\ell<2m\\ \ell,m\geq1}}\gamma_{l-m}\alpha_m\beta_\ell+\sum_{\substack{ \frac{m}{2}<\ell<2m\\ \ell,m\geq1}}\gamma_{\ell+m}\alpha_m\beta_\ell\right)\nonumber\\
\overset{\triangle}{=} & \sqrt{2}\Big(I_1(\alpha,\beta,\gamma)+I_2(\alpha,\beta,\gamma)+I_3(\alpha,\beta,\gamma)\Big).\label{eqlm-3}
\end{align}
Estimates on $I_1$, $I_2$, and $I_3$ are given as follows. Note that
\begin{align}
I_2(\alpha,\beta,\gamma)=&\sum_{\substack{ \ell<m<2\ell\\ \ell,m\geq1}}\gamma_{m-\ell}\alpha_\ell\beta_m \quad(\text{by exchanging } \ell \text{ and } m)\nonumber\\
=&\sum_{\substack{ \frac{m}{2}<\ell<m\\ \ell,m\geq1}}\gamma_{m-\ell}\alpha_\ell\beta_m=I_1(\beta, \alpha, \gamma).\label{eqI1-1}
\end{align}
For $I_1$, one deduces that
\begin{align}
I_1(\alpha, \beta,\gamma)=&\sum_{\substack{\frac{m}{2}<\ell<m\\ \ell,m\geq1}}\gamma_{m-l}\alpha_m\beta_\ell
\xlongequal{n=m-\ell}\sum_{\substack{\frac{n+\ell}{2}<\ell<n+\ell \\ n+\ell\geq 1, \ell\geq 1}}\gamma_n\alpha_{n+\ell}\beta_\ell\nonumber\\
=&\sum_{\substack{\ell, n\geq 1\\ \ell>n}}\gamma_n\alpha_{n+\ell}\beta_\ell=\sum_{n=1}^\infty\sum_{\ell>n}\alpha_{n+\ell}
\beta_\ell\gamma_n\nonumber\\
=&\sum_{n=1}^M\sum_{\ell>n}\alpha_{n+\ell}\beta_\ell\gamma_n+
\sum_{n>M}\sum_{\ell>n}\sqrt{n+\ell}\alpha_{n+\ell}\beta_\ell\frac{\gamma_n}{\sqrt{\ell+n}}\nonumber\\
\leq &\sum_{n=1}^M\left(\sum_{\ell>n}\alpha_{n+\ell}^2\right)^\frac{1}{2}
\left(\sum_{\ell>n}\beta_\ell^2\right)^\frac{1}{2}\gamma_n\nonumber\\
&+\sum_{n>M}\left(\sum_{\ell>n}(n+\ell)\alpha_{n+\ell}^2\right)^\frac{1}{2}
\left(\sum_{\ell>n}\beta_\ell^2\right)^\frac{1}{2}\frac{\gamma_n}{\sqrt{n}}\nonumber\\
\leq& \|\alpha\|_{\ell^2}\|\beta\|_{\ell^2}\sum_{n=1}^M\gamma_n+\|\beta\|_{\ell^2}\|\{\sqrt{k}\alpha_k\}_{k\geq 1}\|_{\ell^2}\sum_{n>M}\frac{\gamma_n}{\sqrt{n}}.\label{eqI1-2}
\end{align}
Note that
\begin{align*}
\sum_{n=1}^M\gamma_n=&\sum_{n=1}^M\frac{\sqrt{n}\gamma_n}{\sqrt{n}}\leq \left(\sum_{n=1}^Mn\gamma_n^2\right)^\frac{1}{2}\left(\sum_{n=1}^M\frac{1}{n}\right)^\frac{1}{2}\\
\leq& \sqrt{2\log(e+M)}\|\{\sqrt{k}\gamma_k\}_{k\geq 1}\|_{\ell^2},
\end{align*}
and
\begin{align*}
\sum_{n>M}\frac{\gamma_n}{\sqrt{n}}=&\sum_{n>M}\frac{\sqrt{n}\gamma_n}{n}\leq\left(
\sum_{n>M}\frac{1}{n^2}\right)^\frac{1}{2}\left(\sum_{n>M}n\gamma_n^2\right)^\frac{1}{2}\\
\leq& M^{-\frac{1}{2}}\|\{\sqrt{k}\gamma_k\}_{k\geq 1}\|_{\ell^2},
\end{align*}
where the facts that $\sum_{n=1}^M\frac{1}{n}\leq 2\log(e+M)$ and $\sum_{n>M}\frac{1}{n^2}\leq \frac{1}{M}$ have been used again.
Therefore, one obtains from \eqref{eqI1-2} that
\begin{align*}
&I_1(\alpha,\beta,\gamma)\\
\leq& \|\{\sqrt{k}\gamma_k\}_{k\geq 1}\|_{\ell^2}\left(\sqrt{2\log(e+M)}\|\alpha\|_{\ell^2}\|\beta\|_{\ell^2}
+M^{-\frac{1}{2}}\|\beta\|_{\ell^2}\|\{\sqrt{k}\alpha_k\}_{k\geq 1}\|_{\ell^2}\right).
\end{align*}
Thanks to this and recalling \eqref{eqI1-2}, one gets that
\begin{align*}
&I_2(\alpha,\beta,\gamma)=I_1(\beta,\alpha,\gamma)\\
\leq &\|\{\sqrt{k}\gamma_k\}_{k\geq 1}\|_{\ell^2}\left(\sqrt{2\log(e+M)}\|\alpha\|_{\ell^2}\|\beta\|_{\ell^2}
+M^{-\frac{1}{2}}\|\alpha\|_{\ell^2}\|\{\sqrt{k}\beta_k\}_{k\geq 1}\|_{\ell^2}\right).
\end{align*}
Now we estimate $I_3(\alpha,\beta,\gamma)$. One deduces that
\begin{align*}
&I_3(\alpha,\beta,\gamma)=\sum_{\substack{ \frac{m}{2}<\ell<2m\\ \ell,m\geq1}}\gamma_{n+\ell}\alpha_m\beta_\ell\\
=&\sum_{m\geq 1}\sum_{\frac{m}{2}<\ell<2m}\sqrt{\ell+m}\gamma_{\ell+m}\beta_\ell\frac{\alpha_m}{\sqrt{\ell+m}}\\
\leq&\sum_{m\geq 1}\frac{\alpha_m}{\sqrt{m}}\left(\sum_{\frac{m}{2}<\ell<2m}(\ell+m)\gamma_{\ell+m}^2\right)^\frac{1}{2}
\left(\sum_{\frac{m}{2}<\ell<2m}\beta_\ell^2\right)^\frac{1}{2}\\
\leq&\|\{\sqrt{k}\gamma_k\}_{k\geq 1}\|_{\ell^2}\|\beta\|_{\ell^2}\sum_{m\geq 1}\frac{\alpha_m}{\sqrt{m}}.
\end{align*}
Substituting the estimates $I_1$, $I_2$, and $I_3$ into \eqref{eqlm-3}, it follows that
\begin{align}
S_3\leq&\sqrt{2}\left(I_1(\alpha,\beta,\gamma)+I_2(\alpha,\beta,\gamma)+I_3(\alpha,\beta,\gamma)\right)\nonumber\\
\leq& C\|\{\sqrt k\gamma_k\}_{k\geq1}\|_{\ell^2}\Bigg(\|\beta\|_{\ell^2}\sum_{m\geq 1}\frac{\alpha_m}{\sqrt{m}}+\sqrt{\log(e+M)}\|\alpha\|_{\ell^2}\|\beta\|_{\ell^2}\nonumber\\
&
+M^{-\frac{1}{2}}\|\{\sqrt{k}\alpha_k\}_{k\geq1}\|_{\ell^2}\|\beta\|_{\ell^2}+M^{-\frac{1}{2}}\|\alpha\|_{\ell^2}\|\{\sqrt{k}\beta_k\}_{k\geq1}\|_{\ell^2}\Bigg)
.\label{eqI1-I3}
\end{align}
Combing \eqref{eqlgm-1}, \eqref{eqmgl-2}, and \eqref{eqI1-I3}, and using \eqref{eqak-1} and \eqref{eqbk-1}, the conclusion follows.
\end{proof}

Notice that the above lemma is true for every integer $M\geq 1$, then we have
\begin{corollary}
Let $\alpha=\{\alpha_k\}_{k\geq1}, \beta=\{\beta_k\}_{k\geq1}, \gamma=\{\gamma_k\}_{k\geq1}$. Then, it holds that
\begin{align*}
&\left|\sum_{\ell,m\geq 1}\sqrt{\frac{\max\{\ell,m\}}{\min\{\ell,m\}}}\left( {\bf 1}_{\ell\neq m}\gamma_{|\ell-m|}+\gamma_{\ell+m}\right)\alpha_m \beta_\ell\right| \\
\leq& C\min_{\substack{M\in \mathbb{N}\\ M\geq 1}}\|\{\sqrt k\gamma_k\}_{k\geq1}\|_{\ell^2}\Bigg(\sqrt{\log(e+M)}\|\alpha\|_{\ell^2}\|\beta\|_{\ell^2}\\
&
+\frac{\|\{\sqrt{k}\alpha_k\}_{k\geq1}\|_{\ell^2}\|\beta\|_{\ell^2}+\|\alpha\|_{\ell^2}\|\{\sqrt{k}\beta_k\}_{k\geq1}\|_{\ell^2}}{\sqrt{M}}\Bigg)
\end{align*}
for an absolute positive constant $C$.
\end{corollary}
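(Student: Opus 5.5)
The corollary follows directly from Lemma \ref{lmsum1}, since the constant $C$ there is absolute and does not depend on $M$. Denote the left-hand side by $|S|$; it does not involve $M$. For each integer $M\geq1$ set
\begin{align*}
F(M):=\|\{\sqrt k\gamma_k\}_{k\geq1}\|_{\ell^2}\Bigg(\sqrt{\log(e+M)}\|\alpha\|_{\ell^2}\|\beta\|_{\ell^2}
+\frac{\|\{\sqrt{k}\alpha_k\}_{k\geq1}\|_{\ell^2}\|\beta\|_{\ell^2}+\|\alpha\|_{\ell^2}\|\{\sqrt{k}\beta_k\}_{k\geq1}\|_{\ell^2}}{\sqrt{M}}\Bigg).
\end{align*}
Lemma \ref{lmsum1} gives $|S|\leq C F(M)$ for every $M\geq1$, with the same $C$. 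Hence $|S|\leq C\inf_{M\geq1}F(M)$.

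It remains to see that the infimum is attained, so that it may be written as a minimum.

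First dispose of the degenerate cases. If any of the norms on the right is infinite, then $F\equiv\infty$ or the statement is trivial. If $\|\{\sqrt k\gamma_k\}\|_{\ell^2}=0$, or $\|\alpha\|_{\ell^2}\|\beta\|_{\ell^2}=0$, then $\alpha=0$ or $\beta=0$ (or $\gamma=0$), so $F\equiv0$ and there is nothing to prove.

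Otherwise, write $F(M)=A\sqrt{\log(e+M)}+B/\sqrt M$ with $A>0$ and $B\geq0$. Then $F(M)\to\infty$ as $M\to\infty$. So only finitely many $M$ satisfy $F(M)\leq F(1)$, and the minimum over this finite set is attained.

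I do not expect a genuine obstacle here. The only points needing care are that $C$ is uniform in $M$, which is visible from the proof of the lemma (every constant there is absolute), and the attainment of the minimum just discussed. Alternatively one may simply read $\min$ as $\inf$.
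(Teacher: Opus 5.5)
Your proposal is correct and follows the paper, which simply observes that Lemma~\ref{lmsum1} holds for every integer $M\geq1$ with an absolute constant $C$ independent of $M$, and then takes the minimum over $M$. Your extra check that the minimum is attained (since $F(M)\to\infty$ when $A>0$, and the degenerate cases give $F\equiv0$ or $F\equiv\infty$) is a detail the paper leaves implicit.
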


Denote
\begin{align*}
\mathcal V=\left\{f\in H^2(\Omega)\Big| f \text{ is periodic in } x, \partial_z f|_{z={-h,0}}=0\right\}.
\end{align*}
Every $f\in \mathcal V$ can be expanded as a Fourier series in the $z$-variable:
\begin{align*}
f(x,z)=f_0(x)+\sum_{k=1}^\infty f_k(x)\cos \frac{k\pi z}{h},
\end{align*}
where
\begin{equation}
f_0(x)=\frac{1}{h}\int_{-h}^0f(x,z)dz,\quad f_k(x)=\frac{2}{h}\int_{-h}^0f(x,z)\cos \frac{k \pi z}{h}dz.\label{2.9}
\end{equation}
For $ s\leq 2$, define the operator $\Lambda_z^s$ on $\mathcal V$ as
\begin{equation}\label{2.10}
  \left(\Lambda_z^sf\right)(x,z)=f_0(x)+\sum_{k=1}^\infty k^sf_k(x)\cos\frac{k\pi z}{h},\quad \forall f\in \mathcal V,
\end{equation}
with $f_0(x)$ and $f_k(x)$ given as in \eqref{2.9}. Note that for $f_0(x)\equiv 0$, $\Lambda_z$ is invertible.

\begin{proposition}\label{prouni1}
Let $a,b\in \mathcal{V}$ satisfy $\int_{-h}^0adz=\partial_x\int_{-h}^0 bdz=0$, and denote
\begin{equation}
w_a(x,z)=-\int_{-h}^z\partial_xa(x,z')dz',
\quad
w_b(x,z)=-\int_{-h}^z\partial_xb(x,z')dz'.\label{wab}
\end{equation}
 Then, for any integral $M\geq 1$, it holds that
\begin{align*}
&\int_\Omega \left(ab_x+w_a\partial_zb+ba_x+w_b\partial_za\right)\Lambda_z^{-1}a\,dxdz\\
\leq &C\left(\sum_{k\geq 1}k\|b_k\|_{L^\infty_x}^2\right)^{\frac12}\left\{\sqrt{\log(e+M)}\|\Lambda_z^{-\frac12}a\|_{2}
\|\Lambda_z^{-\frac12}a_x\|_{2}\right.\\
&+M^{-\frac12}\|a\|_2\|\Lambda_z^{-\frac12}a_x\|_2+M^{-1/2}\|\Lambda_z^{-\frac12}a\|_2\|a_x\|_2\Big\},
\end{align*}
for a positive constant $C$ depending only on $h$, where $b_k$ and $\Lambda_z^{-\frac12}$ are defined as in \eqref{2.9} and \eqref{2.10}, respectively.
\end{proposition}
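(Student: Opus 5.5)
The plan is to expand everything in the cosine basis in $z$, reduce the integral to a trilinear sum over Fourier modes with coefficients that are functions of $x$, and then apply Lemma \ref{lmsum1} pointwise in the $x$-integration. Since $\int_{-h}^0 a\,dz=0$, we have $a_0=0$ and $a=\sum_{m\ge1}a_m(x)\cos\frac{m\pi z}{h}$. Then $w_a=-\sum_{m\ge1}\frac{h}{m\pi}\partial_x a_m\sin\frac{m\pi z}{h}$ and $\Lambda_z^{-1}a=\sum_{\ell\ge1}\ell^{-1}a_\ell\cos\frac{\ell\pi z}{h}$. The condition $\partial_x\int b\,dz=0$ makes $b_0$ a constant. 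The $b_0$ contributions are $b_0 a_x\Lambda_z^{-1}a$, which integrates to zero in $x$ by periodicity, and $ab_{0,x}=0$. Hence only the modes $b_k$, $k\ge1$, matter, with $\partial_zb=-\sum_k\frac{k\pi}{h}b_k\sin\frac{k\pi z}{h}$ and $w_b=-\sum_k\frac{h}{k\pi}\partial_xb_k\sin\frac{k\pi z}{h}$.

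Next, insert these expansions into the four terms and use the product formulas $\cos A\cos B$, $\sin A\sin B$, $\sin A\cos B$. Integrating against $\cos\frac{\ell\pi z}{h}$ over $(-h,0)$ selects triples with $k=|\ell-m|$ or $k=\ell+m$. Before doing this, I would integrate by parts in $x$ to remove all $x$-derivatives from $b$, since the right side involves only $\|b_k\|_{L^\infty_x}$. The terms $ba_x$ and $w_b\partial_z a$ then become expressions involving $b_k\,\partial_x(a_m a_\ell/\ell)$-type products, with the factors $k/m$ and $m/k$ combining. The four terms should combine into a sum of the form
\[
\sum_{k,\ell,m\ge1}c_{k\ell m}\int_{\mathbb T} b_k\,a_m\,\partial_x a_\ell\,dx
\]
(after possibly rearranging via integration by parts, which moves $\partial_x$ between $a_m$ and $a_\ell$). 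The coefficients should satisfy $|c_{k\ell m}|\lesssim \frac{1}{\sqrt{\ell m}}\sqrt{\frac{\max\{\ell,m\}}{\min\{\ell,m\}}}$ when $k\in\{|\ell-m|,\ell+m\}$. The point is that large factors like $k/m$ (from $w_a\partial_z b$) partially cancel against the corresponding parts of $w_b\partial_za$ and $ab_x$. This cancellation is the delicate step, and I expect it to be the main obstacle.

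To handle it, I would first symmetrize. The dangerous piece is $ab_x+w_a\partial_zb$, i.e., the transport of $b$ by $(a,w_a)$. I would write $w_a\partial_zb+ab_x$ using $a_x+\partial_zw_a=0$ and pair it with the mode structure so that the leading part of the coefficient, of size $k/\sqrt{\ell m}$, is antisymmetric in $(\ell,m)$ once combined with $ba_x+w_b\partial_za$ and integration by parts. The residual is then bounded by $\frac{|\ell-m|+1}{\sqrt{\ell m}}\cdot\frac{1}{\min}$, which is controlled by $\frac{1}{\sqrt{\ell m}}\sqrt{\max/\min}$. I would verify the bound separately for the $|\ell-m|$ and the $\ell+m$ branches.

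Once the coefficient bound holds, set $\gamma_k=\|b_k\|_{L^\infty_x}$ and bound the $x$-integral by $\gamma_k\|a_m\|_{L^2_x}\|\partial_xa_\ell\|_{L^2_x}$ (or the symmetric variant). With $\alpha_m=m^{-1/2}\|a_m\|_{L^2_x}$ and $\beta_\ell=\ell^{-1/2}\|\partial_xa_\ell\|_{L^2_x}$, the sum has exactly the form in Lemma \ref{lmsum1}. By Parseval, $\|\alpha\|_{\ell^2}\sim\|\Lambda_z^{-1/2}a\|_2$, $\|\{\sqrt k\alpha_k\}\|_{\ell^2}\sim\|a\|_2$, $\|\beta\|_{\ell^2}\sim\|\Lambda_z^{-1/2}a_x\|_2$, and $\|\{\sqrt k\beta_k\}\|_{\ell^2}\sim\|a_x\|_2$, with constants depending on $h$. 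This yields the stated estimate.
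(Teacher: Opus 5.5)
Your overall architecture matches the paper's: cosine expansion in $z$, the product formulas \eqref{2.11}--\eqref{2.12}, moving $\partial_x$ off $b$, a bound on the trilinear coefficients, and then Lemma~\ref{lmsum1} with $\gamma_k=\|b_k\|_{L^\infty_x}$ and the Parseval identifications you list. Your target bound is also the right one, since $\frac{1}{\sqrt{\ell m}}\sqrt{\max\{\ell,m\}/\min\{\ell,m\}}=1/\min\{\ell,m\}$.

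The gap is that this coefficient bound, which is the whole content of the proposition, is only asserted. Moreover, the mechanism you propose for it targets the wrong term. Up to the factor $h/4$ and signs, the contributions against $\int_{\mathbb T}b_na_\ell\partial_xa_m\,dx$ are:
\begin{itemize}
\item $ab_x$ (after integrating by parts) contributes $\frac1\ell+\frac1m$;
\item $ba_x$ contributes $\frac1\ell$;
\item $w_a\partial_zb$ contributes $\frac{n}{\ell m}\le\frac{\ell+m}{\ell m}$.
\end{itemize}
All three are already $\lesssim 1/\min\{\ell,m\}$. So the ``leading part of size $k/\sqrt{\ell m}$'' you want to cancel is within the budget $\sqrt{\max/\min}$ of Lemma~\ref{lmsum1}, because $k\le \ell+m$, and it needs no cancellation.

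The genuinely dangerous term is $w_b\partial_za$. After moving $\partial_x$ off $b_n$ it produces the coefficients $\frac{\ell}{nm}$ and $\frac{m}{n\ell}$. These are of order one, rather than $O(1/m)$, when $n=|\ell-m|$ is small and $\ell\approx m$ is large. In addition, your stated residual bound $\frac{|\ell-m|+1}{\sqrt{\ell m}\,\min\{\ell,m\}}$ is \emph{not} controlled by $1/\min\{\ell,m\}$: for $\ell=1$, $m=N$ the ratio is $\sqrt N$. So the step as sketched would fail.

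The missing idea is the sign structure in \eqref{2.12}. The $w_b\partial_za$ contribution carries the factor $-{\bf 1}_{m=\ell+n}+{\bf 1}_{\ell=m+n}+{\bf 1}_{n=m+\ell}$, whose part on $n=|\ell-m|$ is antisymmetric under $\ell\leftrightarrow m$. Write $\partial_x(a_\ell a_m)=\partial_xa_\ell\,a_m+a_\ell\,\partial_xa_m$ and relabel. The two large pieces then combine as follows:
\begin{itemize}
\item on ${\bf 1}_{\ell=m+n}$ into $\frac{h(m^2-\ell^2)}{4n\ell m}$, and on ${\bf 1}_{m=\ell+n}$ into $\frac{h(\ell^2-m^2)}{4n\ell m}$, i.e.\ into $-\frac{h(\ell+m)}{4\ell m}$ in both cases;
\item on ${\bf 1}_{n=m+\ell}$ into $-\frac{h(\ell^2+m^2)}{4n\ell m}$.
\end{itemize}
All of these are $\lesssim 1/\min\{\ell,m\}$.

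The paper does this bookkeeping for all four terms together. It obtains the exact coefficients $-\frac{3h}{4m}$ on ${\bf 1}_{\ell=m+n}$ and ${\bf 1}_{m=\ell+n}$, and $\frac{h(m-\ell)}{4mn}$ on ${\bf 1}_{n=m+\ell}$, each bounded by $C/m$. Only then does Lemma~\ref{lmsum1} apply as you describe. Without this explicit computation, or the symmetrization just described, your proposal does not yet prove the proposition.
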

\begin{proof}
By direct computations, one can check easily that
\begin{align}
&\int_{-h}^0 \cos\frac{\ell\pi z}{h}\cos\frac{m\pi z}{h}\cos\frac{n\pi z}{h}dz=\frac{h}{4}\left({\bf 1}_{\ell=m+n}+{\bf 1}_{n=m+\ell}+{\bf 1}_{m=\ell+n}\right),\label{2.11}\\
&\int_{-h}^0 \cos\frac{\ell\pi z}{h}\sin\frac{m\pi z}{h}\sin\frac{n\pi z}{h}dz=\frac{h}{4}\left(-{\bf 1}_{\ell=m+n}+{\bf 1}_{n=m+\ell}+{\bf 1}_{m=\ell+n}\right).\label{2.12}
\end{align}
Recalling that $\int_{-h}^0adz=\partial_x\int_{-h}^0bdz=0$, one has

\begin{align*}
&a(x,z)=\sum_{k=1}^\infty a_k(x) \cos\frac{k\pi z}{h},\quad
b(x,z)=b_0(x)+\sum_{k=1}^\infty b_k(x) \cos\frac{k\pi z}{h},\quad \partial_xb_0(x)=0,
\end{align*}
and thus
\begin{align*}
\left(\Lambda_z^{-1}a\right)(x,z)=\sum_{k=1}^\infty \frac{a_k(x)}{k}\cos\frac{k\pi z}{h}.
\end{align*}
Then, direct computations yield
\begin{align*}
& w_a(x,z)=-\frac{h}{\pi}\sum_{k=1}^\infty \frac{\partial_xa_{k}(x)}{k}\sin\frac{k\pi z}{h},\quad
\partial_z a(x,z)=-\frac{\pi}{h}\sum_{k=1}^\infty ka_k(x)\sin\frac{k\pi z}{h},\\
& w_b(x,z)=-\frac{h}{\pi}\sum_{k=1}^\infty \frac{\partial_xb_{k}(x)}{k}\sin\frac{k\pi z}{h},\quad
\partial_z b(x,z)=-\frac{\pi}{h}\sum_{k=1}^\infty kb_k(x)\sin\frac{k \pi z}{h}.
\end{align*}
Consequently, one obtains by \eqref{2.11} that
\begin{align}
\int_\Omega ab_x\Lambda_z^{-1}a\,dxdz=&\sum_{m,n,\ell\geq 1}\int_\mathbb T \partial_xb_{n}a_\ell \frac{a_m}{m}dx\int_{-h}^0\cos\frac{\ell\pi z}{h}\cos\frac{m\pi z}{h}\cos\frac{n\pi z}{h}dz\nonumber\\
=&\sum_{m,n,\ell\geq1}\frac{h}{4m}\int_\mathbb T\partial_xb_{n}a_\ell a_mdx\left({\bf 1}_{\ell=m+n}+{\bf 1}_{n=m+\ell}+{\bf 1}_{m=\ell+n}\right),\label{2.13}
\end{align}
and similarly
\begin{equation}
\label{2.14}
\int_\Omega ba_x\Lambda_z^{-1}a\,dxdz=\sum_{m,n,\ell\geq 1}\frac{h}{4m}\int_\mathbb Tb_{n}\partial_xa_{\ell}a_mdx\left({\bf 1}_{\ell=m+n}+{\bf 1}_{n=m+\ell}+{\bf 1}_{m=\ell+n}\right),
\end{equation}
where $\partial_x b_0(x)=0$ is used to guarantee $\int_\Omega b_0a_x\Lambda_z^{-1}a dxdz=0$. In fact,
\begin{align*}
\int_\Omega b_0a_x\Lambda_z^{-1}a\,dxdz=& \sum_{m,\ell\geq 1}\frac{b_0}{m}\int_\mathbb T\partial_xa_{\ell}a_mdx\int_{-h}^0\cos\frac{m\pi z}{h}\cos\frac{\ell\pi z}{h}dz\\
=&\frac{hb_0}{2}\sum_{m\geq 1}\frac{1}{m}\int_\mathbb T\partial_xa_{m}a_mdx=0.
\end{align*}
Summing \eqref{2.13} with \eqref{2.14}, it follows from integration by part that
\begin{align}
\int_\Omega (ba_x+ab_x)\Lambda_z^{-1}a\,dxdz=-\sum_{m,n,\ell\geq 1}\frac{h}{4m}\int_\mathbb Tb_{n}a_{\ell}\partial_xa_{m}dx\left({\bf 1}_{\ell=m+n}+{\bf 1}_{n=m+\ell}+{\bf 1}_{m=\ell+n}\right)\label{equni1}.
\end{align}
Next, we have by \eqref{2.12} that
\begin{align}
&\int_\Omega w_a\partial_z b\Lambda_z^{-1} a\,dxdz\\
=&\sum_{m,n,\ell\geq 1}\frac{n}{\ell m} \int_\mathbb T\partial_xa_{\ell}b_na_mdx\int_{-h}^0\sin\frac{\ell\pi z}{h}\sin\frac{n\pi z}{h}\cos\frac{m\pi z}{h}dz\nonumber\\
=&\frac{h}{4}\sum_{m,n,\ell\geq 1}\frac{n}{\ell m} \int_\mathbb T\partial_xa_{\ell}b_na_mdx\left(-{\bf 1}_{m=\ell+n}+{\bf 1}_{n=m+\ell}+{\bf 1}_{\ell=m+n}\right)\nonumber\\
=&\frac{h}{4}\sum_{m,n,\ell\geq 1}\frac{n}{\ell m} \int_\mathbb Ta_{\ell}b_n\partial_xa_{m}dx\left(-{\bf 1}_{\ell=m+n}+{\bf 1}_{n=m+\ell}+{\bf 1}_{m=\ell+n}\right),\label{equni2}
\end{align}
 where, in the last step, we have exchanged the labels $\ell$ and $m$. Similarly
\begin{align}
&\int_\Omega w_b\partial_z a\Lambda_z^{-1} a\,dxdz\nonumber\\
=&
\sum_{m,n,\ell\geq 1}\frac{\ell}{nm}\int_\mathbb T\partial_xb_{n}a_\ell a_mdx\int_{-h}^0\sin\frac{n\pi z}{h}\sin\frac{\ell\pi z}{h}\cos\frac{m\pi z}{h}dz\nonumber\\
=&\frac{h}{4}\sum_{m,n,\ell\geq 1}\frac{\ell}{nm} \int_\mathbb T\partial_xb_{n}a_\ell a_mdx\left(-{\bf 1}_{m=\ell+n}+{\bf 1}_{n=m+\ell}+{\bf 1}_{\ell=m+n}\right)\nonumber\\
=&-\frac{h}{4}\sum_{m,n,\ell\geq 1}\frac{\ell}{nm} \int_\mathbb Tb_{n}\partial_xa_{\ell}a_mdx\left(-{\bf 1}_{m=\ell+n}+{\bf 1}_{n=m+\ell}+{\bf 1}_{\ell=m+n}\right)\nonumber\\
&-\frac{h}{4}\sum_{m,n,\ell\geq 1}\frac{\ell}{nm} \int_\mathbb Tb_{n}a_\ell\partial_xa_{m}dx\left(-{\bf 1}_{m=\ell+n}+{\bf 1}_{n=m+\ell}+{\bf 1}_{\ell=m+n}\right)\nonumber\\
=&-\frac{h}{4}\sum_{m,n,\ell\geq 1}\frac{m}{n\ell} \int_\mathbb Tb_{n}a_{\ell}\partial_xa_{m}dx\left(-{\bf 1}_{\ell=m+n}+{\bf 1}_{n=m+\ell}+{\bf 1}_{m=\ell+n}\right)\nonumber\\
&-\frac{h}{4}\sum_{m,n,\ell\geq 1}\frac{\ell}{nm} \int_\mathbb Tb_{n}a_\ell\partial_xa_{m}dx\left(-{\bf 1}_{m=\ell+n}+{\bf 1}_{n=m+\ell}+{\bf 1}_{\ell=m+n}\right)\label{equni3},
\end{align}
where, in the last step, we have exchanged the labels $\ell$ and $m$ for the first term.

Summing \eqref{equni1}, \eqref{equni2} and \eqref{equni3}, and noticing that the coefficients for ${\bf 1}_{m=\ell+n}$, ${\bf 1}_{n=m+\ell}$, and ${\bf 1}_{\ell=m+n}$ are $-\frac{3h}{4m}$, $\frac{(m-\ell)h}{4mn}$, and $-\frac{3h}{4m}$, respectively, it follows from Lemma \ref{lmsum1} that
\begin{align*}
&\int_\Omega \left(ab_x+w_a\partial_zb+ba_x+w_b\partial_za\right)\Lambda_z^{-1}adxdz\\
=&-\frac{3h}{4}\sum_{\substack{\ell,m,n\geq 1\\ \ell=m+n}}\frac{1}{m}\int_\mathbb Tb_na_\ell\partial_xa_{m}dx-\frac{3h}{4}\sum_{\substack{\ell,m,n\geq 1\\m=\ell+n}}\frac{1}{m}\int_\mathbb Tb_na_\ell\partial_xa_{m}dx\\
&+\frac{h}{4}\sum_{\substack{\ell,m,n\geq 1\\n=m+\ell}}\frac{m-\ell}{mn}\int_\mathbb Tb_na_\ell\partial_xa_{m}dx\\
=&-\frac{3h}{4}\sum_{\substack{\ell,m\geq 1\\ \ell\neq m}}\frac{1}{m}\int_\mathbb Tb_{|\ell-m|}a_\ell\partial_xa_{m}dx+\frac{h}{4}\sum_{\substack{\ell,m\geq 1}}\frac{m-\ell}{m(m+\ell)}\int_\mathbb Tb_{m+\ell}a_\ell\partial_xa_{m}dx\\
\leq& C\sum_{\substack{\ell,m\geq 1\\ \ell\neq m}}\frac{1}{m}\|b_{|\ell-m|}\|_{L^\infty _x}\|a_\ell\|_{L^2_x}\|\partial_xa_{m}\|_{L^2_x}+C\sum_{\substack{\ell,m\geq 1}}\frac{1}{m}\|b_{m+\ell}\|_{L^\infty_x}\|a_\ell\|_{L^2_x}\|\partial_xa_{m}\|_{L^2_x}\\
=&C\sum_{\substack{\ell,m\geq 1\\ \ell\neq m}}\sqrt{\frac{\ell}{m}}\left(\|b_{|\ell-m|}\|_{L^\infty _x}{\bf 1}_{\ell\neq m}+\|b_{m+\ell}\|_{L^\infty_x}\right)\left\|\frac{a_\ell}{\sqrt{\ell}}\right\|_{L^2_x}\left\|\frac{\partial_xa_{m}}{\sqrt{m}}\right\|_{L^2_x}\\
\leq& C\sum_{\ell,m\geq 1}\sqrt{\frac{\max\{\ell,m\}}{\min\{\ell,m\}}}\left({\bf 1}_{\ell\neq m}\|b_{|\ell-m|}\|_{L^\infty_x}+\|b_{m+\ell}\|_{L^\infty_x}\right)\left\|\frac{a_\ell}{\sqrt{\ell}}\right\|_{L^2_x}\left\|\frac{\partial_xa_{m}}{\sqrt{m}}\right\|_{L^2_x}\\
\leq& C\left(\sum_{k\geq 1}k\|b_k\|_{L^\infty_x}^2\right)^{\frac12}\left\{\sqrt{\log(e+M)}\|\Lambda_z^{-\frac12}a\|_{2}
\|\Lambda_z^{-\frac12}a_x\|_{2}\right.\\
&+M^{-\frac12}\|a\|_2\|\Lambda_z^{-\frac12}a_x\|_2+M^{-\frac12}\|\Lambda_z^{-\frac12}a\|_2\|a_x\|_2\Big\},
\end{align*}
proving the conclusion.
\end{proof}

Before giving the proof of the uniqueness, let us state the following proposition on the regularities of Leray-Hopf weak solutions to the primitive equations.

\begin{proposition}
\label{propreg}
  Let $v$ be a Leray-Hopf weak solution to system (\ref{eq8})--(\ref{eq9}), subject to (\ref{eq12})--(\ref{eq14}). Then, the following hold:

(i) $v$ is smooth away from the initial time, i.e.~$v\in C^\infty(\overline\Omega\times(0,\infty))$;

(ii) $v$ satisfies the energy equality
$$
\frac12\|v\|_2^2(t)+\int_0^t\|\nabla v\|_2^2(\tau)d\tau=\frac12\|v_0\|_2^2,\quad\forall t\in(0,\infty);
$$

(iii) $v\in C([0,\infty); L^2(\Omega))$.
\end{proposition}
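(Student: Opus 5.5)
We would prove Proposition \ref{propreg} by combining the known well-posedness theory for strong solutions with the energy inequality. The plan has three steps.

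\textbf{Step 1: reach the strong-solution regime.} By the energy inequality (iii) in Definition \ref{def1}, $\int_0^t\|\nabla v\|_2^2\,ds<\infty$. Hence for a.e.\ small $t_0>0$ we have $v(t_0)\in H^1(\Omega)\cap\mathcal H$, and the energy inequality holds from $t_0$ onward (this is the strong-energy-inequality issue, handled below). Starting from such $t_0$, the global well-posedness of strong solutions (Cao--Titi \cite{Cao1} and the two-dimensional analogues) gives a unique global strong solution $\tilde v$ with $\tilde v(t_0)=v(t_0)$. This solution is smooth for $t>t_0$ by standard parabolic bootstrapping. In 2D this is simple, since the $H^1$ estimates close using the Ladyzhenskaya-type anisotropic inequalities for $w\partial_z v$.

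\textbf{Step 2: weak--strong uniqueness.} We show $v\equiv\tilde v$ on $[t_0,\infty)$. Take the difference $\delta=v-\tilde v$ and test the weak formulation with $\tilde v$, which is admissible because $\tilde v$ is smooth for $t>t_0$ and $H^1$-continuous at $t_0$. Combining this with the energy inequality for $v$ and the energy equality for $\tilde v$ yields
$$\tfrac12\|\delta\|_2^2(t)+\int_{t_0}^t\|\nabla\delta\|_2^2\le \int_{t_0}^t\left|\int_\Omega (\delta\partial_x\tilde v+w_\delta\partial_z\tilde v)\delta\right|.$$
The dangerous term is $\int w_\delta\partial_z\tilde v\,\delta$. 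We bound it by $\|\partial_x\delta\|_2\|\partial_z\tilde v\|_{L^\infty_xL^2_z}$-type estimates, using $|w_\delta|\le\int|\partial_x\delta|dz$ and the 2D anisotropic inequality $\|f\|_{L^2_zL^\infty_x}\lesssim\|f\|_2^{1/2}\|f\|_{H^1}^{1/2}$. This gives
$$C\|\delta\|_2\|\nabla\delta\|_2\|\partial_z\tilde v\|_2^{1/2}\|\nabla\partial_z\tilde v\|_2^{1/2},$$
and the coefficient is time integrable because $\tilde v$ is strong. Gr\"onwall then gives $\delta\equiv0$. Consequently $v$ is smooth on $(t_0,\infty)$, and since $t_0$ can be taken arbitrarily small, (i) follows.

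\textbf{Step 3: energy equality and strong continuity.} Given (i), the energy equality holds on $[s,t]$ for any $0<s<t$ by direct multiplication by $v$. The nonlinear terms vanish by the incompressibility relation \eqref{eq11} and the boundary conditions. Letting $s\to0^+$ along times where the energy inequality holds, we get $\limsup_{s\to0}\|v(s)\|_2\le\|v_0\|_2$. Weak continuity gives $\liminf_{s\to0}\|v(s)\|_2\ge\|v_0\|_2$, so $\|v(s)\|_2\to\|v_0\|_2$. This proves (ii). Norm convergence combined with weak convergence gives strong convergence at $t=0$, and continuity at positive times comes from smoothness, which proves (iii).

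\textbf{Main obstacle.} The delicate point is Step 2. Definition \ref{def1} only gives the energy inequality starting from time $0$, not from $t_0$, so the energy estimate for $\delta$ starting at $t_0$ must be justified carefully. We would handle this as follows:
\begin{itemize}
\item use the energy inequality from $0$ to $t$;
\item subtract the energy identity for $\tilde v$ and the tested equation on $[0,t]$, which requires extending $\tilde v$ backward;
\item alternatively, note that Lions--Temam--Wang solutions obtained by Galerkin approximation satisfy the strong energy inequality for a.e.\ $t_0$, and use the mollified-in-time test function argument of Serrin--Prodi type.
\end{itemize}
In particular, the term $w_\delta\partial_z\tilde v\,\delta$ must be tested rigorously with only $\delta\in L^2H^1$. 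We would do this by time-mollification of $\tilde v$, exploiting that the trilinear form is continuous on $L^\infty L^2\cap L^2H^1$ times the strong-solution class.
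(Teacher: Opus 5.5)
Your Step 3 is sound. Once (i) is known, three facts give (ii):
\begin{itemize}
\item the energy identity on $[s,t]$;
\item the bound $\limsup_{s\to0^+}\|v(s)\|_2\le\|v_0\|_2$ coming from the energy inequality;
\item weak lower semicontinuity at $t=0$.
\end{itemize}
Weak plus norm convergence then gives (iii), which is exactly the paper's argument for (iii).

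The gap is in Step 2, on which your (i), and hence your (ii), rest. The paper does not prove (i)--(ii) itself; it imports them from Corollary 3.1 of Li--Titi. So the nontrivial fact is taken as known there: that an \emph{arbitrary} solution in the sense of Definition \ref{def1} coincides on $(t_0,\infty)$ with the strong solution launched from $v(t_0)$. Your proof has to supply this fact, and it does not. The comparison on $[t_0,t]$ needs $\frac12\|v(t)\|_2^2+\int_{t_0}^t\|\nabla v\|_2^2\,ds\le\frac12\|v(t_0)\|_2^2$ for a.e.\ $t_0$. Definition \ref{def1} only gives the inequality starting from $t=0$. Nor does it follow by testing the equation with $v$ itself, because $\int_\Omega w\,\partial_z v\,v$ is not known to be time-integrable for $v\in L^\infty_tL^2\cap L^2_tH^1$.

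None of your three remedies closes this.
\begin{itemize}
\item \textbf{Energy inequality on $[0,t]$.} It yields the inequality on $[t_0,t]$ only up to the defect $\epsilon(t_0)=\frac12\|v_0\|_2^2-\int_0^{t_0}\|\nabla v\|_2^2\,ds-\frac12\|v(t_0)\|_2^2\ge0$. Removing the defect is the energy equality on $[0,t_0]$, i.e.\ (ii), so this is circular. Keeping it gives $\|v(t)-\tilde v(t)\|_2^2\le 2\epsilon(t_0)e^{C\int_{t_0}^t g\,ds}$, with $g$ the weight from your Step 2 estimate. Indeed $\epsilon(t_0)\to0$ as $t_0\to0^+$, but at no quantified rate. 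Meanwhile $\tilde v$ changes with $t_0$, and $\int_{t_0}^t g\,ds$ is controlled only through $\|v(t_0)\|_{H^1}$, which has no bound uniform as $t_0\to0^+$. So nothing follows.
\item \textbf{Extending $\tilde v$ backward.} This is impossible for a parabolic system. What you would actually need is a solution issuing from $v_0\in L^2$ at $t=0$ whose weight $g$ is integrable near $t=0$. That is essentially the uniqueness problem of Theorem \ref{the1} itself.
\item \textbf{Galerkin limits.} The strong energy inequality for Galerkin limits is a property of the constructed solutions only. Proposition \ref{propreg} must hold for every Leray--Hopf solution, since the proof of Theorem \ref{the1} applies it to two arbitrary solutions $v_1,v_2$. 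Assuming either one is a Galerkin limit presupposes the uniqueness being proved.
\end{itemize}
What is missing is a weak--strong uniqueness (or strong energy inequality) statement valid for every solution in the sense of Definition \ref{def1}. Your proposal correctly identifies this need but does not provide it.
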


\begin{proof}
Points (i) and (ii) are exactly those in Corollary 3.1 of Li--Titi \cite{J.Li4}. Concerning (iii), since (i) implies that $v\in C((0,\infty); L^2(\Omega))$, it remains to show that $v(t)\rightarrow v_0$ in $L^2(\Omega)$ as $t\rightarrow0$. This follows from the facts that $v(t) \rightharpoonup v_0$ in $L^2(\Omega)$ and that $\|v\|_2(t)\rightarrow\|v_0\|_2$ as $t\rightarrow0$, which are, respectively, guaranteed by point (i) of Definition \ref{def1}, and the energy equality in point (ii) of this proposition.
\end{proof}

We are now ready to give the proof of Theorem \ref{the1}.

\begin{proof}[\bf Proof of Theorem \ref{the1}. \rm]
 Let $v_1$ and $v_2$ be two Leray-Hopf weak solutions to
 system (\ref{eq8})--(\ref{eq9}), subject to (\ref{eq12})--(\ref{eq14}), with the same initial data $v_{0}$. Set $a=v_1-v_2$ and $b=\frac{v_1+v_2}{2}$. Then, $(a,b)$ satisfies
 \begin{eqnarray}
  &\displaystyle\partial_ta-\Delta a+ab_x+w_a\partial_zb+ba_x+w_b\partial_za+\partial_xp(x,t)=0,\label{eqcontv}\\
& \partial_x\int_{-h}^0{a}dz=\partial_x\int_{-h}^0{b}dz=0, \label{eqv0a0}
\end{eqnarray}
subject to the boundary conditions (\ref{eq12})--(\ref{eq13}), with initial data $a|_{t=0}=0$, where $w_a$ and $w_b$ are given as in \eqref{wab}. Due to Proposition \ref{propreg}, it is clear that $a, b\in C^\infty(\overline\Omega\times(0,\infty))$ and
\eqref{eqcontv} and \eqref{eqv0a0} are satisfied in classical sense in $\Omega\times(0,\infty)$.

Integrating \eqref{eqcontv} over $\Omega$ and using the boundary conditions (\ref{eq12})--(\ref{eq13}), one obtains that $\partial_t\int_\Omega{a}dxdz=0$ for any $t\in(0,\infty)$. Thanks to this and since $a\in C([0,T);L^2(\Omega))$ guaranteed by (iii) of Proposition \ref{propreg}, one has
$$
\left(\int_\Omega{a}dxdz\right)(t)=\left(\int_\Omega{a}dxdz\right)(0)=0,
$$
that is
$\int_\mathbb T\int_{-h}^0{a}dzdx=0$, for any $t\in(0,\infty)$. This and \eqref{eqv0a0} imply that $\int_{-h}^0{a}dz=0$.

Recall that $b_k(x,t)=\int_{-h}^0b(x,z,t)\cos\frac{k\pi z}{h}dz$.
It follows from the Gagliardo-Nirenberg inequality that
\begin{align}
&\sum_{k\geq 1}k\|b_k\|_{L^\infty_x}^2\leq C\sum_{k\geq 1}k\|b_k\|_{L^2_x}(\|b_k\|_{L^2_x}+\|\partial_xb_{k}\|_{L^2_x})\nonumber\\
\leq& C \left(\sum_{k\geq 1} k^2\|b_k\|_{L^2_x}^2\right)^{\frac12}\left[\left(\sum_{k\geq 1} \|b_k\|_{L^2_x}^2\right)^{\frac12}+\left(\sum_{k\geq 1} \|\partial_xb_{k}\|_{L^2_x}^2\right)^{\frac12}\right]\nonumber\\
=&C\left(\int_\mathbb T\sum_{k\geq 1}k^2b_k^2dx\right)^\frac{1}{2}\left[\left(\int_\mathbb T\sum_{k\geq 1}b_k^2dx\right)^\frac{1}{2}+\left(\int_\mathbb T\sum_{k\geq 1}|\partial_xb_k|^2dx\right)^\frac{1}{2}\right]\nonumber\\
=&C\|\partial_zb\|_2\left(\|b\|_2+\|\partial_xb\|_2\right)\leq C\|b\|_{H^1}^2.\label{eqnb-1}
\end{align}

Note that the function $a$ is smooth on $\overline\Omega\times(0,\infty)$, so is $\Lambda_z^{-1}a$.
Multiplying (\ref{eqcontv}) with $\Lambda_z^{-1}a$, integrating over $\Omega$, and recalling $\int_{-h}^0adz=\partial_x\int_{-h}^0bdz=0$,
it follows from Proposition \ref{prouni1} and \eqref{eqnb-1} that for every $t\in(0,\infty)$ one has
\begin{align}
&\frac{h}{4}\frac{d}{dt}\|\Lambda_z^{-\frac12}a\|_2^2+\frac{h}{2}\left(\| \Lambda_z^{\frac12}a\|_2^2+\|\Lambda_z^{-\frac12}a_x\|_2^2\right)\nonumber\\
=&\int_\Omega(ab_x+w_a\partial_zb+ba_x+w_b\partial_za)\Lambda_z^{-1}adxdz\nonumber\\
\leq& C\left(\sum_{k\geq 1}k\|b_k\|_{L^\infty_x}^2\right)^{\frac12}\left\{\sqrt{\log(e+M)}\|\Lambda_z^{-\frac12}a\|_{2}
\|\Lambda_z^{-\frac12}a_x\|_{2}\right.\nonumber\\
&+M^{-\frac12}\|a\|_2\|\Lambda_z^{-\frac12}a_x\|_2+M^{-\frac12}\|\Lambda_z^{-\frac12}a\|_2\|a_x\|_2\Big\}\nonumber\\
\leq&C\|b\|_{H^1}\left\{\sqrt{\log(e+M)}\|\Lambda_z^{-\frac12}a\|_{2}
\|\Lambda_z^{-\frac12}a_x\|_{2}\right.\nonumber\\
&+M^{-\frac12}\|a\|_2\|\Lambda_z^{-\frac12}a_x\|_2+M^{-\frac12}\|\Lambda_z^{-\frac12}a\|_2\|a_x\|_2\Big\},\label{equni-1}
\end{align}
where the following has been used
\begin{align*}
\int_\Omega\partial_x pa_m\cos\frac{m\pi z}{h}dxdz=\int_\mathbb T \partial_xp a_mdx\int_{-h}^0\cos\frac{m\pi z}{h}dz=0,\quad \forall m\geq 1.
\end{align*}

Set $E_\varepsilon=\|\Lambda_z^{-\frac12}a\|_2^2+\varepsilon$
for $0<\varepsilon <1$. Recalling the definition of $\Lambda_z^{-\frac12}$ and properties of weak solutions, one has
$\|\Lambda_z^{-\frac12}a\|_2^2\leq \|a\|_2^2=\|v_1-v_2\|_2^2\leq 2\|v_0\|_2^2,$
and thus
$0< E_\varepsilon <R$, for a positive constant $R$. Set $M=\lceil \frac{R}{E_\varepsilon} \rceil$. Then, it is clear that $M^{-\frac12}\leq \sqrt{\frac{E_\varepsilon}{R}}$ and $\log(e+M)\leq C\log\left(e+\frac{R}{E_\varepsilon}\right)$. Thanks to these and using \eqref{equni-1}, it follows from the Young inequality that
\begin{align}
\|b\|_{H^1}\sqrt{\log(e+M)}\|\Lambda_z^{-\frac12}a\|_{2}
\|\Lambda_z^{-\frac12}a_x\|_{2}
\leq & \|b\|_{H^1}\sqrt{\log(e+R/E_\varepsilon)}E_\varepsilon^{\frac12}\|\Lambda_z^{-\frac12}a_x\|_{2}\nonumber\\
\leq& \eta \|\Lambda_z^{-\frac12}a_x\|_{2}^2+C_\eta\|b\|_{H^1}^2E_\varepsilon\log(e+R/E_\varepsilon),\label{eqA}\\
\|b\|_{H^1}M^{-\frac12}\|a\|_2\|\Lambda_z^{-\frac12}a_x\|_2\leq& \|b\|_{H^1}\sqrt{\frac{E_\varepsilon}{R}}\|a\|_2\|\Lambda_z^{-\frac12}a_x\|_2\nonumber\\
\leq& \eta \|\Lambda_z^{-\frac12}a_x\|_2^2+C_\eta\|b\|_{H^1}^2E_\varepsilon,\label{eqB}
\end{align}
and
\begin{align}
\|b\|_{H^1}M^{-\frac12}\|\Lambda_z^{-\frac12}a\|_2\|a_x\|_2\leq & \|b\|_{H^1}\sqrt{\frac{E_\varepsilon}{R}}\sqrt{E_\varepsilon}\|a_x\|_2\leq C\|b\|_{H^1}\|a_x\|_{H^1}E_\varepsilon,\label{eqC}
\end{align}
for any $\eta>0$.
Set $f(t)=\|b\|_{H^1}^2+\|b\|_{H^1}\|a_x\|_2$. Then, by the regularities of $v_1$ and $v_2$, it is clear that $f(t)\in L^1((0,T))$. Substituting \eqref{eqA}--\eqref{eqC} into \eqref{equni-1}, choosing $0<\eta\ll 1$, and noticing that $e+R/E_\varepsilon\leq (e+1)R/E_\varepsilon\leq 2eR/E_\varepsilon$, one obtains
\begin{align*}
\frac{d}{dt}E_\varepsilon(t)\leq Cf(t)E_\varepsilon(t)\log(e+R/E_\varepsilon)\leq Cf(t)E_\varepsilon(t)\log(2eR/E_\varepsilon), \quad \text{ for } t>0,
\end{align*}
from which, one gets further that
\begin{align*}
E_\varepsilon(t)\leq \left(E_\varepsilon(\delta)\right)^{e^{-C\int_\delta^tf(s)ds}}(2eR)^{C\int_\delta^tf(s)ds}, \quad\forall \,\, 0<\delta<t<\infty.
\end{align*}
Recalling that $a\in C([0,T);L^2(\Omega))$, one has $\Lambda_z^{-\frac12}a\in C([0,T);L^2(\Omega))$, and thus $E_\varepsilon(t)\in C([0,T))$ and $E_\varepsilon(\delta)\rightarrow E_\varepsilon(0)=\varepsilon$ as $\delta\rightarrow 0^+$.
Thanks to this, by choosing first $\delta\rightarrow 0^+$ and then $\varepsilon\rightarrow 0^+$, one gets $\|\Lambda_z^{-\frac12}a\|_2^2=0$, and thus $a=0$. This concludes the proof of the uniqueness.
\end{proof}

\section*{Acknowledgments}

The work of JL was supported in part by the National Nature Science Foundation of
China (Grant No. 12371204 and No. 12526408) and Guangdong Basic and Applied Basic Research
Foundation (Grant No. 2026A1515010778). The work of EST was supported in part by the DFG Research Unit FOR 5528
on Geophysical Flows.

\end{document}